\newtheorem{thm}{Theorem}[section]
\theoremstyle{definition}
\newtheorem{lem}[thm]{Lemma}
\newtheorem{defn}[thm]{Definition}
\newtheorem{rem}[thm]{Remark}
\theoremstyle{remark}
\numberwithin{equation}{section}
\begin{document}
\title{Popaths and Holinks}          

\author{David A. Miller}             
\email{d.miller@maths.abdn.ac.uk}       
\address{Mathematics Department\\ 
         University of Aberdeen\\
         Aberdeen\\
         United Kingdom\\
         AB24 5UE}

\classification{54E20 (Primary), 55R65 (Secondary)}

\keywords{stratified spaces, homotopy link}

\begin{abstract}
In the study of stratified spaces it is useful to examine spaces of
popaths (paths which travel from lower strata to higher strata) and
 holinks (those spaces of popaths which immediately leave a lower stratum
for their final stratum destination). It is not immediately clear
that for adjacent strata these two path spaces are homotopically
equivalent, and even less clear that this equivalence can be
constructed in a useful way (with a deformation of the space of
popaths which fixes start and end points and where popaths instantly
become members of the holink). The advantage of such an equivalence
is that it allows a stratified space to be viewed categorically
because popaths, unlike holink paths (which are easier to study),
can be composed. This paper proves the aforementioned equivalence in
the case of Quinn's homotopically stratified spaces \cite{FQ}.
\end{abstract}

\received{Month Day, Year}   
\revised{Month Day, Year}    
\published{Month Day, Year}  
\submitted{Name of Editor}  

\volumeyear{2006} 
\volumenumber{1}  
\issuenumber{1}   

\startpage{1}     

\maketitle

\section{Introduction}

\noindent There are many different notions of a stratified space.
One such notion is that of a homotopically stratified space. These
were introduced by Frank Quinn. Here the strata are related by
``homotopy rather than geometric conditions" \cite{FQ}. This makes
them ideal for studying the topology of stratified spaces. Two such
tools for studying that topology are holinks and popath spaces.

\smallskip

\noindent The popaths between two strata are any paths which travel
from one stratum to the other passing only from ``lower strata to
higher strata". On the other hand the holink between two strata
consists only of popaths which instantly leave one stratum for the
other.

\smallskip

\noindent Popaths are very useful in obtaining a categorical view of
stratified spaces. However for a space with many strata the space of
popaths between two strata could be difficult to compute or
visualize. The holink between two strata only depends on the two
strata involved and with this in mind is easier to deal with, but
problems may arises because holink paths can't be composed.
Therefore a result connecting these concepts becomes desirable.

\smallskip

\noindent The result obtained here is that for the space of popaths
and the holink between two fixed strata there exists a homotopy
$h:\mathrm{popaths}\times I\rightarrow \mathrm{popaths}$ which fixes
the start and end points of paths, $\mathrm{image}\{h_{s}\}\subset
\mathrm{holink}$ when $s\in(0,1]$ and $h_{0}=\mathrm{identity}$. It
may seem strange to require a result which is stronger than just the
inclusion being a homotopy equivalence, but exactly this result has
already found a relevance in the work of Jon Woolf \cite{JW} and
seems to be required to construct a particular map to prove that a
stratified space can be reconstructed from its popath category (the
author hopes to show this in the near future).

\smallskip

\thankyou{I would like to take this opportunity to acknowledge the
continuing guidance and assistance of my PhD supervisor Michael
Weiss for which I am very grateful. I would also like to thank Jon
Woolf for many useful comments and correspondences.}

\

\section{Homotopically Stratified Spaces}

\begin{defn} A topological space $X$ is \textbf{filtered} if there are
designated closed subspaces $X^{i}$ indexed by a finite poset
$S_{X}$ such that $X=\bigcup_{i\in S_{X}}X^{i}$ and $X^{j}\subseteq
X^{i}\Leftrightarrow j\leq i$. The \textbf{strata} are defined as
path connected components of $X_{i}=X^{i}-\bigcup_{j<i}X^{j}$. If
$j<i$ we may say $X_{i}$ is a higher stratum than $X_{j}$ or
equivalently $X_{j}$ is a lower stratum than $X_{i}$.
\end{defn}

\smallskip

\begin{defn} A closed subspace $K$ of a filtered space is said to be \textbf{pure} if
it is a union of strata.
\end{defn}

\smallskip

\begin{defn}Assume $W$ is a subspace of a filtered space and $W$ contains the distinct strata $X_{b}$ and $X_{a}$. Let the space of \textbf{popaths}
from $X_{a}$ through $W$ to $X_{b}$ be denoted by
$\mathrm{pop}(X_{b},W,X_{a})$ and defined as the space (with the
compact open topology) of all order preserving paths
$\omega:[0,T_{\omega}]\rightarrow W$ such that $\omega(0)\in X_{a}$,
$\omega(T_{\omega})\in X_{b}$. Here order preserving means if
$t_{1}\leq t_{2}$ and $\omega(t_{1})\in X_{j}$ while
$\omega(t_{2})\in X_{i}$ then $j\leq i$ (meaning $\omega$ cannot
flow from higher strata into lower strata).
\end{defn}

\smallskip

\begin{defn} For a space $S$ and subspace $Y\subset S$ the \textbf{holink} (also called \textbf{homotopy link})
between $Y$ and $S-Y$
 denoted $\mathrm{hol}(S,Y)$ is defined as the space (with the compact open topology) of paths
$\omega:[0,T_{\omega}]\rightarrow S$ where $\omega(0)\in Y$ and
$\omega(t)\in S-Y$ for $t\in(0,T_{\omega}]$.
\end{defn}

\smallskip

\begin{rem} It should be clear that $\mathrm{hol}(X_{b}\cup X_{a},X_{a})$ is the
subspace of $\mathrm{pop}(X_{b},X,X_{a})$ consisting of popaths
which immediately leave $X_{a}$ and travel straight into $X_{b}$.
\end{rem}

\smallskip

\begin{defn}Suppose $K$, $L$ are unions of strata in $X$ and
$L\subseteq K$. Then $L$ is said to be \textbf{tame} in $K$ if there
is a neighborhood $N$ of $L$ in $K$ and a nearly stratum preserving
 strong deformation retraction $r$ of $N$ onto $L$. Here \textbf{nearly
stratum preserving} means points of $K-L$ remain in the same stratum
until the last moment when they get pushed into $L$.
\end{defn}

\smallskip

\begin{defn}Let $R:N-L\rightarrow \mathrm{hol}(N,L)$ be defined by $R(x)(t)=r(x,1-t)$ for all $x\in N-L$.
\end{defn}

\smallskip

\begin{defn}A filtered metric space $X$ is a \textbf{homotopically stratified space}
if for every $j<i$, $X_{j}$ is tame in $X_{j}\cup X_{i}$ and the map
from $\mathrm{hol}(X_{i}\cup X_{j},X_{j})$ to $X_{j}$ given by
evaluation at the start point is a fibration.
\end{defn}

\

\section{Popaths and Holinks in the 2 Strata Case}

\begin{defn} Let $\kappa:\mathrm{pop}(X_{b},X_{b}\cup
X_{a},X_{a})\rightarrow \mathbb{R}$ be the map which sends
$\omega\in \mathrm{pop}(X_{b},X_{b}\cup X_{a},X_{a})$ to the unique
point, $\kappa(\omega)$, in $\mathbb{R}$ such that $\omega(t)\in
X_{a}$ for $t\leq\kappa(\omega)$ and $\omega(t)\in X_{b}$ for
$t>\kappa(\omega)$. Note $\kappa$ is not a continuous map, but it is
upper-continuous. We define upper-continuity as meaning for any
$\omega\in \mathrm{pop}(X_{b},X_{b}\cup X_{a},X_{a})$ and any
neighborhood $V$ of $\kappa(\omega)$ having the form $[0,r)$ there
exists a neighborhood U of $\omega$ such that $f(U)\subset V$.
\end{defn}

\smallskip

\begin{lem}Let $N$ be a neighborhood of tameness for $X_{a}$ in $X_{a}\cup X_{b}$ where $X_{a}, X_{b}$ are distinct.
There exists a continuous map $\lambda:\mathrm{pop}(X_{b},X_{b}\cup
X_{a},X_{a})\rightarrow \mathbb{R}$ that for any popath
$\omega:[0,T_{\omega}]\rightarrow X$ satisfies:
\begin{enumerate}
\item $\lambda(\omega)\in(\kappa(\omega),T_{\omega})$
\item $\omega(t)\in N-X_{a}$
for $t\in(\kappa(\omega),\lambda(\omega)]$
\end{enumerate}
\end{lem}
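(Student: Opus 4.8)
The plan is to reduce the whole lemma to a single insertion problem: I will produce $\lambda$ as a continuous function squeezed strictly between $\kappa$ and a suitable \emph{exit-time} function, and then read off (1) and (2) from that squeezing. First note that it suffices to prove the lemma after replacing $N$ by its interior in $X_a\cup X_b$, since this only shrinks $N$ and hence only strengthens condition (2); so I may assume $N$ is open and still contains $X_a$. For a popath $\omega:[0,T_\omega]\to X_a\cup X_b$ define $\mu(\omega)=\sup\{\,s\in[0,T_\omega] : \omega([0,s])\subseteq N\,\}$. Since $\omega(0)\in X_a\subseteq N$ the supremum is over a nonempty set, $\omega(t)\in N$ for every $t<\mu(\omega)$, and $\mu(\omega)\le T_\omega$. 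The key preliminary inequality is $\kappa(\omega)<\mu(\omega)$: by definition of $\kappa$ we have $\omega(\kappa(\omega))\in X_a\subseteq N$, and since $N$ is open and $\omega$ continuous the set $\omega^{-1}(N)$ is an open neighbourhood of $\kappa(\omega)$ in $[0,T_\omega]$, so it contains $[\kappa(\omega),\kappa(\omega)+\delta)$ for some $\delta>0$, forcing $\mu(\omega)\ge\kappa(\omega)+\delta>\kappa(\omega)$.

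Suppose for a moment I have a continuous $\lambda$ with $\kappa(\omega)<\lambda(\omega)<\mu(\omega)$ for all $\omega$. Then (1) is immediate, since $\lambda(\omega)>\kappa(\omega)$ and $\lambda(\omega)<\mu(\omega)\le T_\omega$. For (2), take $t\in(\kappa(\omega),\lambda(\omega)]$: because $t>\kappa(\omega)$ the definition of $\kappa$ gives $\omega(t)\in X_b$, so $\omega(t)\notin X_a$; and because $t\le\lambda(\omega)<\mu(\omega)$ we have $\omega(t)\in N$. As $N\subseteq X_a\cup X_b$ this is exactly $\omega(t)\in N-X_a$. Thus the entire content of the lemma is the existence of a continuous function inserted \emph{strictly} between the lower function $\kappa$ and the upper function $\mu$.

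These two functions carry complementary semicontinuity. The hypothesis already records that $\kappa$ is upper-continuous, which by the stated definition means precisely that $\{\omega:\kappa(\omega)<c\}$ is open for every $c$. I claim $\mu$ is lower semicontinuous, i.e.\ $\{\omega:\mu(\omega)>c\}$ is open. Indeed, if $\mu(\omega)>c$, choose $c'$ with $c<c'<\mu(\omega)$; then $\omega([0,c'])$ is a compact subset of the open set $N$, so the subbasic compact open neighbourhood $\{\omega':\omega'([0,c'])\subseteq N\}$ is an open set containing $\omega$ on which $\mu>c'>c$. (This uses that for $\omega'$ near $\omega$ the domain $[0,T_{\omega'}]$ still contains $[0,c']$, which holds because $\omega\mapsto T_\omega$ is continuous on the space of paths of varying domain.) Establishing this lower semicontinuity of the exit-time function, together with checking that the ambient path space is well enough behaved to run the insertion, is the main obstacle; the rest is formal.

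Finally I insert $\lambda$. The popath space $\mathrm{pop}(X_b,X_b\cup X_a,X_a)$ sits as a subspace of the space of paths of varying domain into the metric space $X$ and is therefore metrizable, in particular paracompact. For each $\omega$ pick a real number $c_\omega\in(\kappa(\omega),\mu(\omega))$, possible by the inequality $\kappa<\mu$ above; then by the two semicontinuity statements the set $U_\omega=\{\kappa<c_\omega\}\cap\{\mu>c_\omega\}$ is open and contains $\omega$, so $\{U_\omega\}$ is an open cover. Taking a locally finite partition of unity $\{\phi_i\}$ subordinate to it, with $\mathrm{supp}\,\phi_i\subseteq U_{\omega_i}$, I set $\lambda=\sum_i\phi_i\,c_{\omega_i}$. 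This locally finite sum is continuous, and at each $\omega$ it is a convex combination of the values $c_{\omega_i}$ with $\phi_i(\omega)>0$; every such index has $\omega\in U_{\omega_i}$, hence $\kappa(\omega)<c_{\omega_i}<\mu(\omega)$, so the combination again lies strictly in $(\kappa(\omega),\mu(\omega))$. Thus $\kappa<\lambda<\mu$, and by the second paragraph this $\lambda$ satisfies (1) and (2). (Alternatively one may quote a classical insertion theorem for an upper and a lower semicontinuous function in place of this explicit partition-of-unity construction.)
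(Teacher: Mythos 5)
Your proof is correct and follows essentially the same route as the paper's: both exploit metrizability (hence paracompactness) of $\mathrm{pop}(X_{b},X_{b}\cup X_{a},X_{a})$ and glue locally constant choices of $\lambda$ by a partition of unity, with the upper-continuity of $\kappa$ (together with the openness of the condition of remaining in $N$) guaranteeing that a single constant works on a neighborhood of each path. Your packaging via the exit-time function $\mu$, its lower semicontinuity, and the explicit convexity check for the glued sum simply makes precise the ``partition of unity type argument'' that the paper leaves implicit.
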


\smallskip

\begin{rem}This will be useful
when we wish to use the $\kappa$ map but cannot because it is not
continuous.
\end{rem}

\begin{proof}Since $X$ is metric, $\mathrm{pop}(X_{b},X_{b}\cup
X_{a},X_{a})$ is also metric and so paracompact. Therefore by a
partition of unity type argument it suffices to show it is true
locally.

\smallskip

\noindent Fix $\sigma\in \mathrm{pop}(X_{b},X_{b}\cup X_{a},X_{a})$,
clearly we can choose a point $\Gamma$ in $[0,T_{\sigma}]$ which
satisfies the conditions of $\lambda(\sigma)$. Now since $\kappa$ is
upper continuous the same value $\Gamma$ satisfies the conditions to
be $\lambda(\omega)$ for all $\omega$ within a small enough
neighborhood of $\sigma$. Hence the lemma holds locally and
therefore holds globally.
\end{proof}

\smallskip

\begin{defn}Let E denote the space
\begin{displaymath}
\{(\omega,t)\in \mathrm{pop}(X_{b},X_{b}\cup
X_{a},X_{a})\times\mathbb{R}\,:\,t\in(\kappa(\omega),\lambda(\omega)]\}
\end{displaymath}
and $p$ denote the canonical map $p:E\rightarrow
\mathrm{pop}(X_{b},X_{b}\cup X_{a},X_{a})$. In fact $p$ is a fiber
bundle homeomorphic to the trivial bundle with total space
$\mathrm{pop}(X_{b},X_{b}\cup X_{a},X_{a})\times (0,1]$. A
trivialisation is given by
$(\omega,t)\mapsto(\omega,\beta_{\omega}(t))$, where
$\beta_{\omega}(t)$ is the unique member of $(0,1]$ such that
\begin{displaymath}
\int^{t}_{\kappa(\omega)}\mathrm{dist}_{X_{a}}(\omega(t^{\prime}))dt^{\prime}=\beta_{\omega}(t)\int^{\lambda(\omega)}_{\kappa(\omega)}\mathrm{dist}_{X_{a}}(\omega(t^{\prime}))dt^{\prime}.
\end{displaymath}
\end{defn}

\smallskip

\begin{lem} The inclusion map $i$ induces a homotopy equivalence
\begin{displaymath}
\mathrm{hol}(X_{b}\cup X_{a},X_{a})\simeq
\mathrm{pop}(X_{b},X_{b}\cup X_{a},X_{a}).
\end{displaymath}
Furthermore it has a homotopy inverse
$\varphi:\mathrm{pop}(X_{b},X_{b}\cup X_{a},X_{a})\rightarrow
\mathrm{hol}(X_{b}\cup X_{a},X_{a})$ where there exists a homotopy
$h$ from the identity map to $i\circ \varphi$ which fixes the start
and end points of paths and $h_{s}(\omega)\in \mathrm{hol}(X_{b}\cup
X_{a},X_{a})$ for all popaths $\omega$ when $s\in(0,1]$.
\end{lem}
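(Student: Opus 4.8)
The plan is to realise the deformation explicitly, turning the $X_{a}$-portion of a popath into an instantaneous holink tail while keeping the start and end points nailed down. The only tool in the hypotheses that genuinely moves points off $X_{a}$ is the tameness retraction $r$: for a point $y\in N-X_{a}$ it produces the holink tail $R(y)$ based at $r(y,1)\in X_{a}$, whereas $r$ fixes $X_{a}$ pointwise and so can never by itself push the $X_{a}$-journey of a popath up into $X_{b}$. The second, and decisive, tool is the fibration hypothesis: evaluation at the start point $\mathrm{hol}(X_{b}\cup X_{a},X_{a})\to X_{a}$ is a (Hurewicz) fibration, so it admits a lifting function, and I will use this to slide the basepoint of a holink tail along a prescribed path in $X_{a}$.

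First I would fix, for each $s\in(0,1]$, a continuous cut-time. The trivialisation $(\omega,t)\mapsto(\omega,\beta_{\omega}(t))$ of $p:E\rightarrow\mathrm{pop}(X_{b},X_{b}\cup X_{a},X_{a})$ makes $t\mapsto\beta_{\omega}(t)$ a homeomorphism $(\kappa(\omega),\lambda(\omega)]\rightarrow(0,1]$ depending continuously on $\omega$; its inverse gives $\tau_{s}(\omega):=\beta_{\omega}^{-1}(s)\in(\kappa(\omega),\lambda(\omega)]$, continuous in both variables, with $\tau_{1}(\omega)=\lambda(\omega)$ and $\tau_{s}(\omega)\rightarrow\kappa(\omega)$ as $s\rightarrow0^{+}$. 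By the Lemma $\omega(\tau_{s}(\omega))\in N-X_{a}$, so the retraction supplies the holink tail $R(\omega(\tau_{s}(\omega)))$, based at $r(\omega(\tau_{s}(\omega)),1)\in X_{a}$ and ending at $\omega(\tau_{s}(\omega))$. Next I would build a basepoint-correcting path inside $X_{a}$: setting $\delta_{\omega}(t)=r(\omega(t),1)$ for $t\in[0,\lambda(\omega)]$ gives a path in $X_{a}$ (note $r$ is the identity on the $X_{a}$-part of $\omega$ and lands in $X_{a}$ on the rest) running from $\omega(0)$ to $r(\omega(\lambda(\omega)),1)$, continuous in $\omega$ with no reference to the discontinuous map $\kappa$.

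Feeding the reverse of $\delta_{\omega}|_{[0,\tau_{s}(\omega)]}$ together with the tail $R(\omega(\tau_{s}(\omega)))$ into the lifting function of the fibration produces a holink tail now based at $\omega(0)$. Its endpoint need not be $\omega(\tau_{s}(\omega))$, but the track of the far end-point under the lift is a path in $X_{b}$ joining the two, which I would splice in. Concatenating (the tail based at $\omega(0)$) $*$ (this correcting track) $*$ $(\omega|_{[\tau_{s}(\omega),T_{\omega}]})$ defines $h_{s}(\omega)$ for $s\in(0,1]$; setting $h_{0}=\mathrm{id}$ and declaring $i\circ\varphi=h_{1}$ then defines $\varphi$. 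By construction $h_{s}(\omega)$ starts at $\omega(0)$, ends at $\omega(T_{\omega})$, and lies in $X_{b}$ at every positive time (the tail and its correcting track are holink-type, and $\omega|_{[\tau_{s}(\omega),T_{\omega}]}$ already lies in $X_{b}$ since $\tau_{s}(\omega)>\kappa(\omega)$), so $h_{s}(\omega)\in\mathrm{hol}(X_{b}\cup X_{a},X_{a})$ for $s\in(0,1]$.

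The main obstacle will be continuity, in two places. At $s=0$ one must check $h_{s}(\omega)\rightarrow\omega$: as $s\rightarrow0^{+}$ the cut-time $\tau_{s}(\omega)$ decreases to $\kappa(\omega)$, the tail $R(\omega(\tau_{s}(\omega)))$ collapses (its basepoint tends to $\omega(\kappa(\omega))$), and the transported tail together with its correcting track degenerates onto the genuine $X_{a}$-journey $\omega|_{[0,\kappa(\omega)]}$; the upper-continuity of $\kappa$ together with the continuity of $\lambda$ and $\beta$ is exactly what is needed to glue $h_{0}=\mathrm{id}$ continuously onto the rest. The second point is joint continuity in $(\omega,s)$ of the fibration transport, which I would secure by choosing once and for all a lifting function for $\mathrm{hol}(X_{b}\cup X_{a},X_{a})\rightarrow X_{a}$ and feeding it the manifestly continuous data $(\omega,s)\mapsto(\delta_{\omega}|_{[0,\tau_{s}(\omega)]},R(\omega(\tau_{s}(\omega))))$. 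Finally, restricting $h$ to $\mathrm{hol}(X_{b}\cup X_{a},X_{a})$, where $\kappa\equiv0$, keeps paths inside the holink throughout and deforms the identity to $\varphi\circ i$, which supplies the remaining homotopy and shows $\varphi$ is a genuine homotopy inverse to $i$.
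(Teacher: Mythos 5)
Your raw ingredients are the same ones the paper uses: the continuous cut-time $\beta_{\omega}^{-1}(s)$ coming from the trivialisation, the tail $R(\omega(\beta_{\omega}^{-1}(s)))$ supplied by the tameness retraction, and transport of that tail along the retracted initial segment $t\mapsto r(\omega(t),1)$ via the fibration $E_{0}$. For each fixed $s\in(0,1]$ your concatenation is indeed a holink path fixing the endpoints of $\omega$. The genuine gap is at the point you yourself flag as the main obstacle, continuity at $s=0$, and it is not something that upper-continuity of $\kappa$ and continuity of $\lambda$, $\beta$ can repair. For every $s>0$ your $h_{s}(\omega)$ \emph{begins} with the fully transported tail $\eta_{\omega,s}$, i.e.\ the terminal value of the lift of $\overline{\delta_{\omega}|_{[0,\tau_{s}(\omega)]}}$ with initial condition $R(\omega(\tau_{s}(\omega)))$. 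As $s\to0^{+}$ that initial condition converges (by continuity of $r$) to the \emph{constant} length-one path at $\omega(\kappa(\omega))$, which is not an element of $\mathrm{hol}(X_{b}\cup X_{a},X_{a})$, since it sits in $X_{a}$ at positive times. So the input to your chosen lifting function converges to a point outside the domain on which that lifting function is defined and continuous, and nothing controls $\eta_{\omega,s}$ in this limit: it need not converge, and it certainly need not collapse. Even for the most favorable lifting function, the limit of $\eta_{\omega,s}$ is the transport of a degenerate path along the non-degenerate base path $\overline{\omega|_{[0,\kappa(\omega)]}}$, generically a non-constant holink path $\eta_{0}$ based at $\omega(0)$ (at best a constant path of positive Moore length), so that $\lim_{s\to0}h_{s}(\omega)$ is $\eta_{0}$ followed by a correcting track followed by $\omega|_{[\kappa(\omega),T_{\omega}]}$ --- not $\omega$. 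Your sentence claiming the transported tail and its track ``degenerate onto the genuine $X_{a}$-journey $\omega|_{[0,\kappa(\omega)]}$'' is exactly the unproved, and in general false, step.

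This is precisely where the paper's assembly differs from yours, and the difference is the whole content of the lemma. In the paper's formula $h_{s}(\omega)$ never traverses the fully transported tail: on $[0,(1-s)\beta_{\omega}^{-1}(s)]$ it samples the lift $\widetilde{F}$ only at evaluation times at most $s(1-s)$, so it shadows the \emph{basepoint} track $r(\omega(\cdot),1)$ (which equals $\omega$ itself on $[0,\kappa(\omega)]$) at a ``height'' that tends to $0$ with $s$; it then zips up to the full tail only over the shrinking interval $[(1-s)\beta_{\omega}^{-1}(s),\beta_{\omega}^{-1}(s)]$, where the holink paths being sampled are close to $R(\omega(\beta_{\omega}^{-1}(s)))$, whose collapse to the point $\omega(\kappa(\omega))$ \emph{is} controlled by continuity of $r$. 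Thus for $s>0$ the deformed path still leaves $X_{a}$ instantly, yet every value appearing in the limit $s\to0$ is pinned either to the basepoint track or to the collapsing tail. A plain concatenation (full tail)$\,*\,$(endpoint track)$\,*\,$(rest of $\omega$) cannot achieve this; to repair your argument you would have to interpolate between the fiber and base directions of $\widetilde{F}$ in the way the paper does, at which point you have reproduced its proof.
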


\smallskip

\begin{rem} This lemma means in the two strata case we have a continuous way of
deforming the space of popaths into the holink so that popaths
instantly become holink paths.
\end{rem}

\begin{proof}We will in fact directly construct the maps $h_{s}:\mathrm{pop}(X_{b},X_{b}\cup X_{a},X_{a})
\rightarrow \mathrm{pop}(X_{b},X_{b}\cup X_{a},X_{a})$ which fix
start and end points, have image in $\mathrm{hol}(X_{b}\cup
X_{a},X_{a})$ when $s\in(0,1]$ and where $h_{0}$ is the identity
map. Then by setting $\varphi=h_{1}$ we have proved the lemma.

\smallskip

\noindent Let $N$ be a neighborhood of tameness for $X_{a}$ in
$X_{a}\cup X_{b}$ and $r$ be the corresponding strong deformation
retraction. Define a map $F:\mathrm{pop}(X_{b},X_{b}\cup
X_{a},X_{a})\times (0,1]\times I\rightarrow X_{a}$ by for all
$\omega\in \mathrm{pop}(X_{b},X_{b}\cup X_{a},X_{a})$ sending
$(\omega,s,t)$ to
$r\left(\omega(t\cdot\beta_{\omega}^{-1}(s)),1\right)$. Define
another map $G:\mathrm{pop}(X_{b},X_{b}\cup
X_{a},X_{a})\times(0,1]\rightarrow \mathrm{hol}(X_{b}\cup
X_{a},X_{a})$ by $\omega\mapsto
R\left(\omega(\beta_{\omega}^{-1}(s))\right)$ (see Definition 1.7).
The definition of Quinn stratified spaces tells us the map $E_{0}$,
evaluating a holink path at its start point, is a fibration. So
there is a lift $\widetilde{F}$,

\smallskip

$$\xymatrix{\mathrm{pop}(X_{b},X_{b}\cup
X_{a},X_{a})\times (0,1]\times\{1\}\ar[r]^-{G}\ar[d]&\mathrm{hol}(X_{b}\cup
X_{a},X_{a})\ar[d]^-{E_{0}}\\\mathrm{pop}(X_{b},X_{b}\cup X_{a},X_{a})\times (0,1]\times
I\ar[r]^-{F}\ar@{-->}[ru]^-{\widetilde{F}}&X_{a}}$$

\
\begin{center}
\begin{pspicture}(0,0)(10,5)

\psbezier[linestyle=dashed,linewidth=1.5pt](10,0)(7,1)(11,2)(9,3)
\qline(1,0)(10,0) \rput[c](1,-.5){$0$} \rput[c](10,-.5){$1$}
\psbezier[linestyle=dashed,linewidth=1pt](9,0)(6,1.2)(10,2)(8,3.1)
\psbezier[linestyle=dashed,linewidth=1pt](8,0)(5,1.5)(9,2)(7,3.3)
\psbezier[linestyle=dashed,linewidth=1pt](7,0)(4.5,2)(7,2)(6,3.3)
\psbezier[linestyle=dashed,linewidth=1pt](6,0)(4,2)(6,2)(5,3.5)
\psbezier[linestyle=dashed,linewidth=1pt](5,0)(3,2)(5,2)(4,3.5)
\psbezier[linestyle=dashed,linewidth=1pt](4,0)(2.5,2)(4,2)(3.2,3.7)
\psbezier[linestyle=dashed,linewidth=1pt](3,0)(1.5,2)(3,2)(2.5,4)
\psbezier[linestyle=dashed,linewidth=1pt](2,0)(0.5,2)(2,2)(2,4)
\psbezier[linestyle=dashed,linewidth=1pt](1,0)(-0.5,2)(1,2)(1.3,3.8)
\rput[c](10.5,1.5){$R(\omega(\beta_{\omega}^{-1}(s)))$}
\rput[c](-0.5,1.8){$\widetilde{F}(\omega,s,0)$}
\rput[c](5,-.5){$X_{a}$} \rput[c](5,4.1){$X_{b}$}

\end{pspicture}
\end{center}

\

\

\noindent Now we can use $\widetilde{F}$ to construct to $h$. When
$s=0$, $h_{s}$ is the identity and when $s\in(0,1]$

$$h_{s}(\omega)(t)=\left\{\begin{array}{ll}\widetilde{F}\left(\omega,s,\frac{t+s\beta_{\omega}^{-1}(s)}{\beta_{\omega}^{-1}(s)}(1-s)\right)(\frac{t}{\beta_{\omega}^{-1}(s)}s)&0\leq
t\leq(1-s)\beta_{\omega}^{-1}(s)
\\\widetilde{F}\left(\omega,s,\frac{t}{\beta_{\omega}^{-1}(s)}\right)\left(\frac{s(s-1)(t-\beta_{\omega}^{-1}(s))+(t-(1-s)\beta_{\omega}^{-1}(s))}{s\beta_{\omega}^{-1}(s)}\right)&(1-s)\beta_{\omega}^{-1}(s)< t\leq\beta_{\omega}^{-1}(s)
\\\omega(t)&\beta_{\omega}^{-1}(s)\leq t\leq
T_{\omega}\end{array}\right..$$
\begin{center}
\begin{pspicture}(1.1,-.9)(10,6.1)

\psbezier[linestyle=dashed,linewidth=1pt](10,0)(7,1)(11,2)(9,3)
\qline(1,0)(10,0) \rput[c](1,-.3){$t=0$}
\rput[c](11,0.5){$t=(1-s)\beta_{\omega}^{-1}(s)$}

\psbezier[linestyle=dashed,linewidth=1pt](9,0)(6,1.2)(10,2)(8,3.1)
\psbezier[linestyle=dashed,linewidth=1pt](8,0)(5,1.5)(9,2)(7,3.3)
\psbezier[linestyle=dashed,linewidth=1pt](7,0)(4.5,2)(7,2)(6,3.3)
\psbezier[linestyle=dashed,linewidth=1pt](6,0)(4,2)(6,2)(5,3.5)
\psbezier[linestyle=dashed,linewidth=1pt](5,0)(3,2)(5,2)(4,3.5)
\psbezier[linestyle=dashed,linewidth=1pt](4,0)(2.5,2)(4,2)(3.2,3.7)
\psbezier[linestyle=dashed,linewidth=1pt](3,0)(1.5,2)(3,2)(2.5,4)
\psbezier[linestyle=dashed,linewidth=1pt](2,0)(0.5,2)(2,2)(2,4)

\psbezier[linestyle=dashed,linewidth=1pt](1,0)(-0.5,2)(1,2)(1.3,3.8)
\rput[c](10.5,3){$t=\beta_{\omega}^{-1}(s)$}
\rput[c](0,2.7){$\widetilde{F}(\omega,s,0)$}
\rput[c](5,-.5){$X_{a}$} \rput[c](5,4.1){$X_{b}$}

\psbezier[linewidth=1.3pt](1,0)(1,0)(1,0)(8.5,0.47)
\psbezier[linewidth=1.3pt](8.5,0.47)(8,1.3)(10.35,2.1)(9,3)
\psbezier[linewidth=1.3pt](9,3)(10,4)(10,4.5)(11,5)

\psline[linewidth=1pt]{->}(6,-1)(5.5,0.22)
\psline[linewidth=1pt]{->}(9.5,0.47)(8.55,0.47)
\psline[linewidth=1pt]{->}(8,4.5)(9.15,1.9)

\rput[c](6,-1.2){$\widetilde{F}\left
(\omega,s,\frac{t+s\beta_{\omega}^{-1}(s)}{\beta_{\omega}^{-1}(s)}(1-s)\right
)(\frac{t}{\beta_{\omega}^{-1}(s)}s)$}
\rput[c](6,5){$\widetilde{F}(\omega,s,\frac{t}{\beta_{\omega}^{-1}(s)})(\frac{s(s-1)(t-\beta_{\omega}^{-1}(s))+(t-(1-s)\beta_{\omega}^{-1}(s))}{s\beta_{\omega}^{-1}(s)})$}
\rput[c](10.5,4){$\omega(t)$}
\end{pspicture}
\end{center}

\noindent Clearly this is continuous within the three intervals for
$s\in(0,1]$. To see it is continuous at
$t=(1-s)\beta_{\omega}^{-1}(s)$ just substitute for $t$ and get
$\widetilde{F}\left(\omega,s,1-s\right)(s(1-s))$ for both
expressions. To see it is continuous at $t=\beta_{\omega}^{-1}(s)$
substitute into the second expression to get
$\widetilde{F}\left(\omega,s,1\right)(1)$, now
\begin{displaymath}
\widetilde{F}\left(\omega,s,1\right)(1)=G\left(\omega,s\right)(1)=R\left(\omega(\beta_{\omega}^{-1}(s))\right)(1)=\omega(\beta_{\omega}^{-1}(s))
\end{displaymath}
so it is continuous at $t=\beta_{\omega}^{-1}(s)$.

\smallskip

\noindent To prove $h_{s}\rightarrow Id$ as $s\rightarrow 0$ we will
prove it is true in each of the three intervals in the definition of
$h_{s}$ (in the last interval there is nothing to prove).

\noindent In the first interval
\begin{align*}
&\lim_{s\rightarrow0}\widetilde{F}\left (\omega,s,\frac{t+s\beta_{\omega}^{-1}(s)}{\beta_{\omega}^{-1}(s)}(1-s)\right )(\frac{t}{\beta_{\omega}^{-1}(s)}s)\\
=&\lim_{s\rightarrow0}\widetilde{F}\left(\omega,s,\frac{t}{\beta_{\omega}^{-1}(s)}\right)(0)\\
=&\lim_{s\rightarrow0}F\left(\omega,s,\frac{t}{\beta_{\omega}^{-1}(s)}\right)\\
=&\lim_{s\rightarrow0}r\left(\omega(\frac{t}{\beta_{\omega}^{-1}(s)}\beta_{\omega}^{-1}(s))\right),1)\\
=&\lim_{s\rightarrow0}r(\omega(t)),1),
\end{align*}
and since we are within $0\leq t\leq (1-s)\beta_{\omega}^{-1}(s)$
which tends towards $0\leq t\leq \kappa(\omega)$ as $s$ tends to $0$
then $r(\omega(t)),1)$ tends to $\omega(t)$ as $s$ tends to $0$ for
$0\leq t\leq (1-s)\beta_{\omega}^{-1}(s)$.

\smallskip

\noindent The second interval tends to
$t=\lim_{s\rightarrow0}\beta_{\omega}^{-1}(s)=\kappa(\omega)$ as $s$
tends to $0$ and
\begin{align*}
&\lim_{s\rightarrow0}\widetilde{F}\left(\omega,s,\frac{t}{\beta_{\omega}^{-1}(s)}\right)\left(\frac{s(s-1)(t-\beta_{\omega}^{-1}(s))+(t-(1-s)\beta_{\omega}^{-1}(s))}{s\beta_{\omega}^{-1}(s)}\right)\\
=&\lim_{s\rightarrow0}\widetilde{F}\left(\omega,s,1\right)(q)\\
=&\lim_{s\rightarrow0}G\left(\omega,s\right)(q)\\
=&\lim_{s\rightarrow0}R\left(\omega(\beta_{\omega}^{-1}(s))\right)(q),
\end{align*}
where
$q=\frac{s(s-1)(t-\beta_{\omega}^{-1}(s))+(t-(1-s)\beta_{\omega}^{-1}(s))}{s\beta_{\omega}^{-1}(s)}$.
Now $\beta_{\omega}^{-1}(s)\rightarrow \kappa(\omega)$ as
$s\rightarrow 0$  so
$r\left(\omega(\beta_{\omega}^{-1}(s)),-\right)$ tends to the
constant path $\omega(\kappa(\omega))$. Therefore
$\lim_{s\rightarrow0}R\left(\omega(\beta_{\omega}^{-1}(s))\right)(q)=\omega(\kappa(\omega))$.

\smallskip

\noindent This $h$ satisfies our requirements as detailed at the
start of the proof.\qedhere

\end{proof}

\section{Popaths and Holinks in the General Case}

\noindent In a slight abuse of notation we will throughout this
section redefine and use symbols like $\kappa, N, \lambda, E$ and
$\beta$ in a more general context.

\begin{defn} Consider $X_{a}, X_{b}$ to be distinct strata in a homotopically stratified space $X$. Let $\kappa:\mathrm{pop}(X_{b},X,X_{a})\rightarrow \mathbb{R}$ be the map which sends
$\omega\in \mathrm{pop}(X_{b},X,X_{a})$ to the unique point,
$\kappa(\omega)$, in $\mathbb{R}$ such that $\omega(t)\in X_{a}$ for
$t\leq\kappa(\omega)$ and $\omega(t)\not\in X_{a}$ for
$t>\kappa(\omega)$. Note $\kappa$ is not a continuous map, but as in
Definition 2.1 it is upper-continuous.
\end{defn}

\begin{lem} Suppose $X$ is a homotopically stratified space and $K\subset
X$ is pure. Then there is a nearly stratum preserving strong
deformation retract $r$ of a neighborhood $N$ of $K$ in $X$ to $K$.
The neighborhood may be referred to as a neighborhood of tameness.
\end{lem}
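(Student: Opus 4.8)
The plan is to argue by induction on the number of strata of $X$ not contained in $K$. The base case is $K=X$, where $N=K$ and $r$ is the constant (identity) homotopy, so there is nothing to prove. For the inductive step, I would choose a stratum $X_{b}$ that is \emph{minimal} among the strata of $X$ not lying in $K$. Minimality guarantees that every stratum strictly below $X_{b}$ already belongs to $K$, so that $\overline{X_{b}}\setminus X_{b}$ is a union of strata of $K$ and $K':=K\cup X_{b}$ is again closed and pure. Since $X-K'$ has one fewer stratum than $X-K$, the inductive hypothesis supplies a neighborhood of tameness $N'$ of $K'$ in $X$ together with a nearly stratum preserving strong deformation retraction $r'$ of $N'$ onto $K'$. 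It then remains to produce a neighborhood of tameness of $K$ inside $K'$ and to combine the two retractions.

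The heart of the matter, and the step I expect to be the main obstacle, is the intermediate claim that $K$ is tame in $K'=K\cup X_{b}$. Here $X_{b}$ is a single stratum whose frontier $\overline{X_{b}}\setminus X_{b}$ lies entirely in $K$ and consists of strata lower than $b$, so the retraction we seek fixes $K$ and moves only the points of $X_{b}$, pushing those near $K$ into the appropriate lower strata while keeping them in $X_{b}$ until they arrive. For each stratum $X_{j}\subseteq K$ with $j<b$ the definition of a homotopically stratified space already furnishes a nearly stratum preserving retraction of $X_{b}\cup X_{j}$ onto $X_{j}$; the difficulty is to amalgamate these local data into a single retraction onto the whole pure set $\overline{X_{b}}\setminus X_{b}$, continuously across the locus where $X_{b}$ simultaneously abuts several strata of $K$. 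I would carry this out as in the proof of Lemma 2.2: since $X$ is metric, $X_{b}$ is paracompact, so a partition of unity lets me patch the local retractions, and I would use the holink evaluation fibration $E_{0}$ (exactly as the lift $\widetilde{F}$ was produced in the two-strata Lemma) to straighten the patched homotopy into an honest strong deformation retraction whose tracks remain in $X_{b}$ for $t<1$. This yields a neighborhood $N''$ of $K$ in $K'$ with a nearly stratum preserving retraction $r''$ onto $K$.

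Finally I would compose the two pieces. Shrinking $N'$ if necessary so that $r'_{1}(N')\subseteq N''\cup K$, I set $N$ to be this smaller neighborhood and define the combined homotopy by first running $r'$ to push $N$ into $K'$ and then running $r''$ to push the result into $K$, reparametrising the two stages into the single interval $I$ so as to obtain one strong deformation retraction. Checking the required properties is then routine: the composite fixes $K$ pointwise because both $r'$ and $r''$ do; a point in a stratum off $K$ descends monotonically (by order preservation) and meets $K$ only through the terminal stage, so the composite is again nearly stratum preserving; and continuity across the frontier $\overline{X_{b}}\setminus X_{b}$ follows because $r''$ collapses the tracks of points approaching $K$ to the corresponding constant paths, matching the identity that $r'$ already induces on $K'$. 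This exhibits $N$ as a neighborhood of tameness of $K$ in $X$ and completes the induction.
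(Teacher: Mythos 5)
Your proposal diverges from the paper at the outset: the paper does not prove this lemma at all --- it simply quotes it as the first part of Proposition 3.2 of Quinn's ``Homotopically Stratified Sets''. So you are attempting an actual proof where the author deferred to the literature, and unfortunately the attempt founders on precisely the point that makes Quinn's proposition substantive. The fatal step is the final composition. If $r'$ retracts $N'$ onto $K'=K\cup X_{b}$ nearly stratum preservingly, and $r''$ retracts a neighborhood $N''\subseteq K'$ onto $K$ nearly stratum preservingly, the two-stage homotopy (first $r'$, then $r''$) is \emph{not} nearly stratum preserving: a point $x$ in a stratum $X_{d}\not\subseteq K'$ whose first-stage endpoint $r'_{1}(x)$ lies in $X_{b}-K$ spends the entire second stage travelling inside $X_{b}$, so it leaves its own stratum at time $\tfrac{1}{2}$ but only reaches $K$ at time $1$, whereas the definition demands it remain in $X_{d}$ until the last moment. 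Your justification --- that such a point ``meets $K$ only through the terminal stage'' --- conflates order preservation (which the composite does satisfy) with near stratum preservation (which it does not). Concretely, take $X=\mathbb{R}^{2}$ with $K=\{0\}$, $X_{b}=\{(x,0):x>0\}$, and the complement of the closed ray as top stratum: your composite moves a point $(x,y)$, $y\neq 0$, vertically onto the positive axis and then slides it along the axis to the origin, so its track sits in $X_{b}$ for the whole second half. A correct retraction (e.g.\ the radial one) must keep the point off the axis until the final instant, and no reparametrisation of the two stage-wise retractions produces it; one must deform the tracks themselves, which is what Quinn's argument, via the holink fibration and his rearrangement machinery, actually does.

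There is a second gap in your ``heart of the matter'' step. A partition of unity can patch \emph{real-valued} data, as in Lemma 2.2 of the paper, because $\mathbb{R}$ is convex; it cannot patch deformation retractions, whose values are points and paths of $X$ that you have no way to average. The appeal to ``straightening via the holink evaluation fibration $E_{0}$'' names the right tool but is a placeholder for the hard argument, not a proof. (A smaller point: you induct on the number of strata not contained in $K$, but strata here are path components of the $X_{i}$ and may be infinite in number even though the poset $S_{X}$ is finite; the induction should run over the finite poset, adjoining a whole minimal filtration level at a time.)
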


\begin{proof}
This is the first part of Proposition 3.2 of ``Homotopically
Stratified Sets" by Frank Quinn \cite{FQ}.
\end{proof}

\smallskip

\begin{lem}Consider $X_{a}$ as a lowest possible stratum in $X$ by
if necessary discarding any lower strata (which are of no
consequence when considering $\mathrm{pop}(X_{b},X,X_{a})$). Let $N$
be a neighborhood of tameness for $X_{a}$ in $X$. There exists a
continuous map $\lambda:\mathrm{pop}(X_{b},X,X_{a})\rightarrow
\mathbb{R}$ that for all popaths $\omega:[0,T_{\omega}]\rightarrow
X$ satisfies:
\begin{enumerate}
\item $\lambda(\omega)\in(\kappa(\omega),T_{\omega})$
\item $\omega(t)\in N-X_{a}$
for $t\in(\kappa(\omega),\lambda(\omega)]$
\end{enumerate}
\end{lem}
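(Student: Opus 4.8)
The statement to prove is Lemma 4.3, the general-case analogue of Lemma 3.2 (the earlier two-strata $\lambda$-lemma). The target is essentially identical: produce a continuous $\lambda:\mathrm{pop}(X_b,X,X_a)\to\mathbb{R}$ satisfying $\lambda(\omega)\in(\kappa(\omega),T_\omega)$ and $\omega(t)\in N-X_a$ for $t\in(\kappa(\omega),\lambda(\omega)]$. The plan is to reuse the proof strategy of Lemma 3.2 essentially verbatim, since none of its ingredients were special to the two-strata situation. Let me sketch why it carries over and where the single new input enters.

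Let me check what the earlier proof actually used. First, $X$ is a metric space, so $\mathrm{pop}(X_b,X,X_a)$ with the compact-open topology is metrizable, hence paracompact; this is unchanged in the general setting. Second, the proof reduced the global construction to a purely local one via a partition-of-unity argument (metrizability gives paracompactness, and a locally defined family of admissible values can be glued to a continuous global $\lambda$ because the defining conditions $(1)$–$(2)$ are preserved under taking any value in an interval, so convex combinations of locally valid choices remain valid). That gluing argument is insensitive to the number of strata. Third, the local step used only upper-continuity of $\kappa$, and Definition 4.1 explicitly asserts that the general $\kappa$ is again upper-continuous, exactly as in Definition 2.1.

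So the proof proceeds as follows. First I would fix $\sigma\in\mathrm{pop}(X_b,X,X_a)$ and choose a single real number $\Gamma\in(\kappa(\sigma),T_\sigma)$ with $\sigma(t)\in N-X_a$ for all $t\in(\kappa(\sigma),\Gamma]$; such a $\Gamma$ exists because $\sigma$ is continuous and $\sigma$ leaves $X_a$ at time $\kappa(\sigma)$ into the open neighborhood $N$ of $X_a$, so a short initial segment of the post-$\kappa(\sigma)$ portion of $\sigma$ lies in $N-X_a$. Here I must be slightly careful: $N$ is now a neighborhood of tameness for $X_a$ in all of $X$ (supplied by Lemma 4.2, with $K=X_a$ pure after discarding lower strata as the lemma instructs), and I need $\sigma((\kappa(\sigma),\Gamma])\subset N-X_a$ rather than in $X_b$ specifically — the popath may pass through intermediate strata, but that is fine since $N$ is a neighborhood of $X_a$ in $X$ and the condition only asks the segment to stay in $N-X_a$. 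Then, using upper-continuity of $\kappa$ together with continuity of the evaluation/compact-open structure, the same fixed $\Gamma$ continues to satisfy $\Gamma\in(\kappa(\omega),T_\omega)$ and $\omega((\kappa(\omega),\Gamma])\subset N-X_a$ for all $\omega$ in a sufficiently small neighborhood of $\sigma$. Finally, patch these locally constant choices together by a partition of unity to obtain the continuous global $\lambda$.

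**Main obstacle.** The only genuinely new point, and the one I would scrutinize, is whether the \emph{local constancy} of the admissible segment really survives the passage to arbitrarily many intermediate strata — specifically that for $\omega$ near $\sigma$ the whole segment $\omega((\kappa(\omega),\Gamma])$ stays inside $N-X_a$. The upper-continuity of $\kappa$ controls the \emph{left} endpoint (it keeps $\kappa(\omega)<\Gamma$), and compact-open closeness keeps $\omega$ uniformly near $\sigma$ on the compact parameter set, hence inside the open set $N$; the delicate part is excluding $X_a$ on the open interval $(\kappa(\omega),\Gamma]$, since a nearby popath could in principle re-enter $X_a$. But order-preservation forbids this: once a popath leaves $X_a$ it cannot return to the lowest stratum $X_a$, so $\omega(t)\notin X_a$ automatically for every $t>\kappa(\omega)$. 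This is exactly where treating $X_a$ as a lowest stratum (the hypothesis of the lemma) does the work, and I expect this observation to be the crux that makes the two-strata argument go through unchanged.
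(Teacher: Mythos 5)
Your proposal is correct and takes exactly the paper's route: the paper's own proof of this lemma is the single line that it ``is proved in exactly the same way as'' the two-strata $\lambda$-lemma, namely metrizability of the popath space gives paracompactness, a partition-of-unity argument reduces everything to a local statement, and upper-continuity of $\kappa$ ensures a fixed value $\Gamma$ remains admissible on a whole neighborhood of each $\sigma$. Your closing observation --- that order-preservation together with $X_a$ being a lowest stratum forbids nearby popaths from re-entering $X_a$, so the admissibility of $\Gamma$ really is locally stable --- is precisely the detail that justifies transferring the two-strata argument verbatim, which the paper leaves implicit.
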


\begin{rem}Again this will be
useful when we wish to use the $\kappa$ map but cannot because it is
not continuous.
\end{rem}

\begin{proof}
This is proved in exactly the same way as Lemma 2.2.
\end{proof}

\smallskip

\begin{defn}Let E denote the space $\{(\omega,t)\in \mathrm{pop}(X_{b},X,X_{a})
\times\mathbb{R}\,:\,t\in(\kappa(\omega),\lambda(\omega)]\}$ and $p$
denote the canonical map $p:E\rightarrow
\mathrm{pop}(X_{b},X,X_{a})$. In fact $p$ is a fiber bundle
homeomorphic to the trivial bundle with total space
$\mathrm{pop}(X_{b},X,X_{a})\times (0,1]$. Let $\beta_{\omega}$
denote a reparametrization of $(\kappa(\omega),\lambda(\omega)]$ to
$(0,1]$ giving a trivialization. The trivialisation can be obtained
in the same way as in Definition 2.4.
\end{defn}

\smallskip

\begin{defn}Let $B$ be a path space and $A$ a subspace of $B$. We
will say the inclusion $A\subset B$ is a special path inclusion if
there exists a homotopy $h:B\times I\rightarrow B$ which fixes the
start and end points of paths, $\mathrm{image}\{h_{s}\}\subset A$
when $s\in(0,1]$ and $h_{0}=\mathrm{identity}$. Note this implies
the inclusion map is a homotopy equivalence. Also note the
composition (not concatenation) of two special path inclusions is
again a special path inclusion.
\end{defn}

\smallskip

\begin{rem}Lemma 2.5 proves that $\mathrm{hol}(X_{b}\cup X_{a},X_{a})\subset \mathrm{pop}(X_{b},X_{b}\cup
X_{a},X_{a})$ is a special path inclusion. The aim of this paper is
to show that $\mathrm{hol}(X_{b}\cup X_{a},X_{a})\subset
\mathrm{pop}(X_{b},X,X_{a})$ is a special path inclusion for any two
strata $X_{a}$ and $X_{b}$ of a homotopically stratified space $X$.
\end{rem}

\smallskip

\begin{lem}Consider $X_{b}$ as any
stratum in $X$ and $X_{a}$ as a lowest possible stratum in $X$ by if
necessary discarding any lower strata. If $\mathrm{hol}(X_{b}\cup
X_{a},X_{a})\subset \mathrm{pop}(X_{b},X,X_{a})$ is a special path
inclusion then
\begin{displaymath}
\{\sigma\in \mathrm{pop}(X_{b},X,X)\,:\,\sigma(\delta)\not\in
X_{a}\text{ for all } \delta>0\}\subset \mathrm{pop}(X_{b},X,X)
\end{displaymath}
is also a special path inclusion.
\end{lem}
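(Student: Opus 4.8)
The plan is to realise the claimed homotopy as an \emph{extension} of the hypothesised one. Since $X_{a}$ has been taken to be a lowest stratum it is closed, so evaluation at the start point $\mathrm{ev}_{0}:\mathrm{pop}(X_{b},X,X)\to X$ is continuous and exhibits $\mathrm{pop}(X_{b},X,X_{a})=\mathrm{ev}_{0}^{-1}(X_{a})$ as a closed subspace of $\mathrm{pop}(X_{b},X,X)$, with open complement consisting of the popaths whose start point lies in $X-X_{a}$. By order preservation and the minimality of $X_{a}$, any such popath never meets $X_{a}$, so (once we insist on fixing start points) it already lies in the target subspace $A:=\{\sigma:\sigma(\delta)\notin X_{a}\text{ for all }\delta>0\}$. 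On the closed subspace the hypothesis supplies a homotopy $h$ with $h_{0}=\mathrm{id}$, fixing endpoints, and with $h_{s}(\sigma)\in\mathrm{hol}(X_{b}\cup X_{a},X_{a})\subset A$ for $s\in(0,1]$. It therefore suffices to extend $h$ to a homotopy $H$ on all of $\mathrm{pop}(X_{b},X,X)$ that still fixes endpoints, has $H_{0}=\mathrm{id}$, and has $H_{s}(\sigma)\in A$ for $s\in(0,1]$.

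First I would set up the bookkeeping that makes the extension uniform across the two kinds of popath. The device is the continuous integral clock $I_{\sigma}(t)=\int_{0}^{t}\mathrm{dist}_{X_{a}}(\sigma(t'))\,dt'$ underlying the trivialisation $\beta$ of Definition 3.5: it is jointly continuous in $(\sigma,t)$, it is flat (equal to $0$) precisely on the initial $X_{a}$-dwelling interval $[0,\kappa(\sigma)]$ when $\sigma(0)\in X_{a}$, and it increases immediately from $t=0$ when $\sigma(0)\notin X_{a}$. Extending $\lambda$ to a neighbourhood of the closed subspace by the argument of Lemma 3.3, I would define a cut time $c_{s}(\sigma)=\beta_{\sigma}^{-1}(s)$ for all popaths starting inside the tameness neighbourhood $N$, so that $c_{s}(\sigma)\to\kappa(\sigma)$ as $s\to 0$ for $X_{a}$-starting popaths and $c_{s}(\sigma)\to 0$ for the others, continuously in $\sigma$. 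The homotopy $H_{s}$ would then alter each popath only on the front segment $[0,c_{s}(\sigma)]$ and retain $\sigma(t)$ for $t\in[c_{s}(\sigma),T_{\sigma}]$, so that endpoints and the terminal stratum $X_{b}$ are preserved exactly (as in the third case of the formula of Lemma 2.5); on the front segment the nearly stratum preserving retraction $r$ of Lemma 3.2, together with the behaviour dictated by $h$, pushes the path off $X_{a}$, reproducing the piecewise construction of Lemma 2.5 with $X_{b}\cup X_{a}$ replaced by $X$. A cutoff supported in $N$ would make $H_{s}$ the identity on popaths starting outside $N$; these never limit onto $X_{a}$-starting popaths, so no interpolation is required there.

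The hard part will be continuity of $H$ along the interface between the closed subspace $\mathrm{ev}_{0}^{-1}(X_{a})$ and its complement, that is, for sequences $\sigma_{n}\to\sigma$ with $\sigma_{n}(0)\in N-X_{a}$ but $\sigma(0)\in X_{a}$. Here the limit $\sigma$ is forced by $h$ to leave $X_{a}$ at once, while each $\sigma_{n}$ starts off $X_{a}$ and may be deformed only slightly, and one must verify that the slight deformations converge to the drastic one; this is the essential point on which the whole lemma turns, and it is exactly where the extension must be arranged to reduce to $h$ on the closed subspace rather than merely to land in $A$. This is also where the integral clock earns its keep: $I_{\sigma_{n}}\to I_{\sigma}$ uniformly forces $c_{s}(\sigma_{n})=\beta_{\sigma_{n}}^{-1}(s)\to\beta_{\sigma}^{-1}(s)=c_{s}(\sigma)$, so the front segments being modified converge, and because $r$ fixes $X_{a}$ pointwise and is nearly stratum preserving, the push applied to the front of $\sigma_{n}$ converges to that applied to the front of $\sigma$. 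Once this interface continuity is secured, continuity within each of the three $t$-intervals is routine, and the boundary matchings at $t=c_{s}(\sigma)$ together with the limits as $s\to 0$ are checked by the same substitutions used in Lemma 2.5. Confirming that $H_{s}(\sigma)$ genuinely leaves $X_{a}$ immediately for every $s\in(0,1]$ then completes the three conditions of Definition 3.6, and setting $\varphi=H_{1}$ yields the homotopy inverse.
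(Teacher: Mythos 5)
Your reduction is sound as far as it goes: the complement of $\mathrm{ev}_{0}^{-1}(X_{a})$ does lie in the target subspace, the crux is indeed continuity at the interface between popaths starting on $X_{a}$ and popaths starting just off it, and some damping supported in $N$ is needed. But your proposed resolution of that crux is self-defeating. You insist both that the extension $H$ ``reduce to $h$ on the closed subspace'' and that, on the complement, $H_{s}$ ``alter each popath only on the front segment $[0,c_{s}(\sigma)]$'' while retaining $\sigma(t)$ afterwards. These two demands are incompatible. For $s\in(0,1]$ the hypothesis places $h_{s}(\sigma)$ in $\mathrm{hol}(X_{b}\cup X_{a},X_{a})$, a path whose image lies entirely in $X_{a}\cup X_{b}$; so if $\sigma$ starts in $X_{a}$ and later passes through an intermediate stratum $X_{c}$, then $h_{s}(\sigma)$ must differ from $\sigma$ along that whole stretch, not just near the start. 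Now take $\sigma_{n}\to\sigma$ with $\sigma_{n}(0)\in N-X_{a}$ and $\sigma(0)\in X_{a}$ (the situation the lemma must handle in general). By your prescription $H_{s}(\sigma_{n})$ agrees with $\sigma_{n}$ after time $c_{s}(\sigma_{n})$, so its limit agrees with $\sigma$ after time $c_{s}(\sigma)$ and in particular still runs through $X_{c}$, whereas $H_{s}(\sigma)=h_{s}(\sigma)$ never leaves $X_{a}\cup X_{b}$. No convergence of the integral clock can repair this: the clock controls the fronts of the paths, but the discontinuity sits in their tails.

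The idea you are missing is exactly the one the paper uses, and it consists in refusing to extend $h$ across the interface. Instead, every popath $\sigma$ starting in $N$ is pushed into the domain of $h$ by prepending the reversed tameness track: $\sigma^{+}(t)=r(\sigma(0),1-t)$ for $0\leq t\leq 1$ and $\sigma^{+}(t)=\sigma(t-1)$ afterwards; this is continuous in $\sigma$ and requires no case distinction between $\sigma(0)\in X_{a}$ and $\sigma(0)\notin X_{a}$. Then $h$ is applied not at a fixed parameter value but at a time-dependent strength $f(\sigma^{+},s)(t+1)$, a bump in $t$ which vanishes at $t+1=1$ and for $t+1\geq\lambda(\sigma^{+})$, and one sets
\begin{displaymath}
g_{s}(\sigma)(t)=\bigl(h_{f(\sigma^{+},s)(t+1)}\sigma^{+}\bigr)(t+1).
\end{displaymath}
Because the strength vanishes at $t=0$ and beyond $\lambda(\sigma^{+})$, this fixes endpoints and leaves tails untouched for every $\sigma$ uniformly, including those starting in $X_{a}$, so the interface problem you isolated never arises; because the strength is positive for $1<t+1<\lambda(\sigma^{+})$, the image for $s>0$ leaves $X_{a}$ instantly; and the whole thing is extended by the identity over popaths starting outside $N$, which is consistent since the modification is confined to the part of the path inside $N$. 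Your proposal assembles the same ingredients ($r$, $\lambda$, $\beta$, $h$) but lacks this ``prepend, then evaluate $h$ at a variable parameter'' device, and with the requirement $H|_{\mathrm{ev}_{0}^{-1}(X_{a})}=h$ in place it cannot be completed.
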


\begin{proof}First let us show $\{\sigma\in
\mathrm{pop}(X_{b},X,N)\,:\,\sigma(\delta)\not\in X_{a}$ for all
$\delta>0\}\subset \mathrm{pop}(X_{b},X,N)$ is a special path
inclusion for $N$ a neighborhood of tameness of $X_{a}$ in $X$.

\smallskip

\noindent Given a path $\sigma\in \mathrm{pop}(X_{b},X,N)$ define
$\sigma^{+}\in \mathrm{pop}(X_{b},X,X_{a})$ by
$$\sigma^{+}(t)=\left\{\begin{array}{ll}r(\sigma(0),1-t)&0\leq t\leq 1\\\sigma(t-1)&1\leq t\leq
T_{\omega}+1\end{array}\right..$$

\smallskip

\noindent Let f be the map from $\mathrm{pop}(X_{b},X,X_{a})\times
I$ to Moore paths onto $I$ defined by

$$f(\omega,s)(t)=\left\{\begin{array}{ll}0&0\leq t\leq 1
\\\left(\min\{1,(t-1)(\lambda(\omega)-t)\}\right)s&1\leq t\leq\lambda(\omega)
\\0&\lambda(\omega)\leq t\leq T_{\omega}
\end{array}\right.$$

\noindent for all $\omega\in \mathrm{pop}(X_{b},X,X_{a})$ where
$1<\lambda(\omega)$ and defined by $f(\omega,s)(t)=0$ for all
$t\in[0,T_{\omega}]$ if $\lambda(\omega)\leq1$.

\

\noindent Now define a suitable homotopy for special path inclusion
$g:\mathrm{pop}(X_{b},X,N)\times I\rightarrow
\mathrm{pop}(X_{b},X,N)$ by
$g_{s}(\sigma)(t)=\left(h_{f(\sigma^{+},s)(t+1)}\sigma^{+}\right)(t+1)$
where $h$ is a homotopy for the special path inclusion
$\mathrm{hol}(X_{b}\cup X_{a},X_{a})\subset
\mathrm{pop}(X_{b},X,X_{a})$. Intuitively this can be thought of as
concatenating a path $[0,1]\rightarrow X$ to the start of $\sigma$
then manipulating the $(1,\lambda(\sigma))$ part of the new path
away from $X_{a}$ using $h$ and finally removing the $[0,1)$ part
that was added.

\smallskip

\noindent Now since for all $s$, $g_{s}(\sigma)(t)=\sigma(t)$ when
$t\geq t_{0}$ for any $t_{0}$ where $\sigma(t_{0})\not\in N$ we can
extend $g$ to give a special path inclusion $\{\sigma\in
\mathrm{pop}(X_{b},X,X)\,:\,\sigma(\delta)\not\in X_{a}$ for all
$\delta>0\}\subset \mathrm{pop}(X_{b},X,X)$ by setting
$g_{s}(\sigma)$ as $\sigma$ when $\sigma\in
\mathrm{pop}(X_{b},X,X-N)$.
\end{proof}

\smallskip

\begin{thm}$\mathrm{hol}(X_{b}\cup X_{a},X_{a})\subset
\mathrm{pop}(X_{b},X,X_{a})$ is a special path inclusion for any two
distinct strata $X_{a}$ and $X_{b}$ of a homotopically stratified
space $X$.
\end{thm}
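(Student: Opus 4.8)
The plan is to argue by induction on the number of strata. Since any order preserving popath from $X_a$ to $X_b$ visits only strata $X_c$ with $X_a\le X_c\le X_b$, I would first discard every stratum not lying weakly between $X_a$ and $X_b$; these are irrelevant to $\mathrm{pop}(X_b,X,X_a)$, and after discarding them $X_a$ becomes a lowest stratum, as required by the earlier constructions. The base case, in which $X_a$ and $X_b$ are the only strata, is exactly Lemma 2.5, because then every popath already has image in $X_a\cup X_b$. For the inductive step I would exhibit the required homotopy as a composite of two special path inclusions, which is legitimate by the closing remark in the definition of special path inclusion (the composite $H_s=h^{AB}_s\circ h^{BC}_s$ of the two homotopies lands in the smallest space for every $s>0$).

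The first special path inclusion pushes every popath off $X_a$, so that it immediately leaves $X_a$. For this I would adapt the construction of Lemma 2.5: using the general-case neighborhood of tameness $N$ of $X_a$ and the continuous substitute $\lambda$ for the discontinuous exit time $\kappa$ (the first lemmas of this section), together with the trivialising reparametrisation $\beta$ of $(\kappa(\omega),\lambda(\omega)]$ onto $(0,1]$, each popath is modified only on the initial segment $[0,\beta_\omega^{-1}(s)]$, which lies inside $N$. The lift against the start-point evaluation fibration is used as in Lemma 2.5, except that the destination stratum is left unconstrained: the output is only required to leave $X_a$ at once, entering whichever stratum the popath was already heading into. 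This yields a special path inclusion of $\{\omega\in\mathrm{pop}(X_b,X,X_a): \omega\ \text{immediately leaves}\ X_a\}$ into $\mathrm{pop}(X_b,X,X_a)$.

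The second, and harder, special path inclusion deforms a popath that immediately leaves $X_a$ into the holink $\mathrm{hol}(X_b\cup X_a,X_a)$. Here I would peel $X_a$ away: after its fixed starting point such a popath lives among the strata strictly above $X_a$, that is, in the smaller stratified set obtained by deleting $X_a$, which has one fewer stratum and to which the inductive hypothesis applies. Two difficulties arise, and both are resolved exactly as in the proof of the preceding lemma: different popaths leave $X_a$ into different strata, so there is no single intermediate stratum through which to route, and the deleted set $X-X_a$ is open rather than a closed stratified subspace. I would therefore apply the inductive hypothesis to the relevant lower pair, upgrade it by the preceding lemma from ``starts in the new lowest stratum'' to ``starts anywhere'' (so that it may be applied to the tail of a path that has just emerged from $X_a$), carry out the resulting deformation inside the tameness neighborhood $N$, and glue it with the identity outside $N$. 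Composing the two special path inclusions completes the induction.

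I expect the main obstacle to be this second step. The deformation must be uniform over the whole popath space, which is only metric and paracompact, so the non-continuity of $\kappa$ again forces everything to be phrased through $\lambda$ and $\beta$; and the reduction to a space with fewer strata is delicate precisely because $X-X_a$ is not itself a stratified subspace, which is exactly why the preceding lemma is stated with a ``starts anywhere'' conclusion. As in Lemma 2.5, verifying that the glued formula is continuous at the seams and that it converges to the identity as $s\to 0$ will absorb most of the technical effort.
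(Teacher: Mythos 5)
Your overall skeleton (induction on the number of strata, base case Lemma 2.5, a composite of two special path inclusions, Lemma 3.8 to upgrade ``starts in the lowest stratum'' to ``starts anywhere'') matches the paper in spirit, but your choice of intermediate space is not the paper's, and both halves of your composite have genuine gaps. Your first step --- deforming $\mathrm{pop}(X_{b},X,X_{a})$ into $\{\omega : \omega(t)\notin X_{a}\ \text{for all}\ t>0\}$ ``as in Lemma 2.5, with the destination stratum unconstrained'' --- requires lifting against the start-point evaluation $E_{0}:\mathrm{hol}(X,X_{a})\rightarrow X_{a}$ (equivalently $\mathrm{hol}(N,X_{a})\rightarrow X_{a}$), because just after leaving $X_{a}$ a popath may wander through several strata, and which stratum it enters first is not even locally constant on the popath space, so you cannot reduce to one stratum at a time. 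The definition of a homotopically stratified space only makes the two-stratum evaluations $\mathrm{hol}(X_{i}\cup X_{j},X_{j})\rightarrow X_{j}$ fibrations; the global statement is a substantial theorem about stratified spaces (of the Quinn--Hughes ``stratified fibration'' type) that you neither cite nor prove. It is not a routine adaptation: the fibration in Lemma 2.5 is exactly what allows paths to be forced out of $X_{a}$ instantly while their start points stay fixed, and note that in the paper your step 1 statement is deduced (via Lemma 3.8) \emph{from} the theorem you are trying to prove, so assuming it as a first step is close to circular.

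Your second step fails for a structural reason. Paths in your intermediate space start in $X_{a}$, so any homotopy supplied by the inductive hypothesis (which lives in $X-X_{a}$) can only be applied to a tail $\omega|_{[\epsilon,T_{\omega}]}$ with $\epsilon>0$; such homotopies fix the start point of the tail, and the initial segment $\omega|_{[0,\epsilon]}$ is untouched, hence may still run through intermediate strata. Consequently, for $s>0$ the image of your homotopy is not contained in $\mathrm{hol}(X_{b}\cup X_{a},X_{a})$ --- this fails even at $s=1$ --- violating the definition of special path inclusion, and no choice of cut time $\epsilon(s)$ can repair it. The paper's induction is organized precisely to dodge both problems: its intermediate target is $\mathrm{pop}(X_{b},X-X_{c},X_{a})$ for $X_{c}$ a lowest intermediate stratum, so paths are allowed to dwell in $X_{a}$ throughout. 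That weaker target is reachable with tameness alone: $k_{s}$ pushes the tail off $X_{c}$ (inductive hypothesis plus Lemma 3.8 applied in $X-X_{a}$), and $j_{s}$ rebuilds the start by retracting it into $X_{a}$ and inserting the nearly stratum preserving reverse-retraction path $R$ of a junction point that now avoids $X_{c}$ --- no fibration is needed there, and start points can be fixed exactly because the rebuilt path may linger in $X_{a}$. The condition of leaving $X_{a}$ instantly, and with it the only use of the fibration hypothesis, is postponed to the single final application of Lemma 2.5 after all intermediate strata have been removed. Making ``avoid one intermediate stratum'' rather than ``immediately leave $X_{a}$'' the intermediate goal is the idea your proposal is missing.
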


\begin{proof}In the case when $X$ only has two strata then
$X=X_{a}\cup X_{b}$ and the proposition is proved as Lemma 2.5.
Likewise if there are no other strata which popaths from $X_{a}$ to
$X_{b}$ can pass through then
$\mathrm{pop}(X_{b},X,X_{a})=\mathrm{pop}(X_{b},X_{a}\cup
X_{b},X_{a})$ and the proposition is again proved by Lemma 2.5.
Therefore we can assume $X_{a}$ is a lowest stratum in $X$ and there
is at least one other stratum, $X_{c}$, which popaths from $X_{a}$
to $X_{b}$ may pass through, let $X_{c}$ denote a lowest stratum of
this type.

\smallskip

\noindent We will assume inductively that the lemma holds for
$\mathrm{hol}(X_{b}\cup X_{c},X_{c})\subset
\mathrm{pop}(X_{b},X,X_{c})$ and using this show
$\mathrm{pop}(X_{b},X-X_{c},X_{a})\subset
\mathrm{pop}(X_{b},X,X_{a})$ is a special path inclusion. Then the
proposition can be proven by removing strata like $X_{c}$ until we
are in the two strata case which has been proved already as Lemma
2.5.

\smallskip

\noindent We will define the homotopy we require as a composition
(not concatenation) of two homotopies.

\smallskip

\noindent Define the first homotopy $j$ by $j_{0}=\mathrm{identity}$
and for $s\in(0,1]$ and all $\omega\in \mathrm{pop}(X_{b},X,X_{a})$
$$j_{s}(\omega)(t)=\left\{\begin{array}{ll}r\left(\omega(t\cdot\frac{\beta^{-1}_{\omega}(s)}{\beta^{-1}_{\omega}(\frac{1}{2}s)}),1\right)&0\leq t\leq \beta^{-1}_{\omega}(\frac{1}{2}s)
\\R\left(\omega(\beta^{-1}_{\omega}(s))\right)(\frac{t-\beta^{-1}_{\omega}(\frac{1}{2}s)}{\beta^{-1}_{\omega}(s)-\beta^{-1}_{\omega}(\frac{1}{2}s)})&\beta^{-1}_{\omega}(\frac{1}{2}s)\leq t\leq \beta^{-1}_{\omega}(s)
\\\omega(t)&\beta^{-1}_{\omega}(s)\leq t\leq T_{\omega}
\end{array}\right..$$

\noindent Intuitively this homotopy retracts the start of the path
back into $X_{a}$ and then travels along the reverse of the tameness
retraction of $\omega(\beta^{-1}_{\omega}(s))$ before continuing
along the original path from then onwards.

\smallskip

\noindent For the second homotopy $k$ again define
$k_{0}=\mathrm{identity}$ and for $s\in(0,1]$ and all $\omega\in
\mathrm{pop}(X_{b},X,X_{a})$
$$k_{s}(\omega)(t)=\left\{\begin{array}{ll}\omega(t)&0\leq t\leq \beta^{-1}_{\omega}(\frac{1}{2}s)
\\g_{s}\left(\omega|_{[\beta^{-1}_{\omega}(\frac{1}{2}s),T_{\omega}]}\right)(t)&\beta^{-1}_{\omega}(\frac{1}{2}s)\leq t\leq T_{\omega}
\end{array}\right.,$$
where $g$ is the homotopy constructed in the previous lemma with
respect to $\{\sigma\in
\mathrm{pop}(X_{b},X-X_{a},X-X_{a})|\sigma(\delta)\not\in X_{c}$ for
all $\delta>0\}\subset \mathrm{pop}(X_{b},X-X_{a},X-X_{a})$ which
can used because of the inductive hypothesis. This homotopy ensures
that for $s>0$, $k_{s}(\omega)(t)\not\in X_{c}$ for
$t\in(\beta^{-1}_{\omega}(\frac{1}{2}s),T_{\omega}]$.

\smallskip

\noindent Now if we define a homotopy
$l:\mathrm{pop}(X_{b},X,X_{a})\times I\rightarrow
\mathrm{pop}(X_{b},X,X_{a})$ by $l_{s}=j_{s}\circ k_{s}$ we get
$\mathrm{image} \{l_{s}\}\subset \mathrm{pop}(X_{b},X-X_{c},X_{a})$
for $s>0$ because the start of the path is the retraction to $X_{a}$
by a nearly stratum preserving retraction of the point $g_{s}\left
(\omega|_{[\beta^{-1}_{\omega}(\frac{1}{2}s),T_{\omega}]}\right
)(\beta^{-1}_{\omega}(s))$ which is not in $X_{c}$ or a lower
strata. Clearly $l_{0}=\mathrm{identity}$ because
$j_{0}=k_{0}=\mathrm{identity}$ and $l$ fixes start and end points
of paths because both $j$ and $k$ fix start and end points of paths.

\smallskip

\noindent Thus we have satisfied the requirements for
$\mathrm{pop}(X_{b},X-X_{c},X_{a})\subset
\mathrm{pop}(X_{b},X,X_{a})$ to be a special path inclusion and so
using induction to remove strata like $X_{c}$ we reach the two
strata case and so conclude $\mathrm{hol}(X_{b}\cup
X_{a},X_{a})\subset \mathrm{pop}(X_{b},X,X_{a})$ is a special path
inclusion by Lemma 2.5.\qedhere

\end{proof}

\

\

\end{document}